\date{\today}
\newtheorem{theorem}{Theorem}
\newtheorem{proposition}[theorem]{Proposition}
\newtheorem{corollary}[theorem]{Corollary}
\newtheorem{lemma}[theorem]{Lemma}
\theoremstyle{definition}
\newtheorem{example}[theorem]{Example}
\newtheorem{remark}[theorem]{Remark}
\begin{document}

\title[On semitopological interassociates of the bicyclic monoid]{On semitopological interassociates of the bicyclic monoid}

\author[Oleg~Gutik and Kateryna Maksymyk]{Oleg~Gutik and Kateryna Maksymyk}
\address{Faculty of Mathematics, National University of Lviv,
Universytetska 1, Lviv, 79000, Ukraine}
\email{o\underline{\hskip5pt}\,gutik@franko.lviv.ua, ovgutik@yahoo.com, kate.maksymyk15@gmail.com}

\keywords{Semigroup, interassociate of a semigroup, semitopological semigroup, topological semigroup, bicyclic monoid, locally compact space, discrete space, remainder}

\subjclass[2010]{20M10, 22A15, 54D40, 54D45, 54H10.}

\begin{abstract}
Semitopological interassociates $\mathscr{C}_{m,n}$ of the bicyclic semigroup $\mathscr{C}(p,q)$ are studied. In particular, we show that for arbitrary non-negative integers $m$, $n$ and every Hausdorff topology $\tau$ on $\mathscr{C}_{m,n}$ such that $\left(\mathscr{C}_{m,n},\tau\right)$ is a semitopological semigroup, is discrete. Also, we prove that if an interassociate of the bicyclic monoid $\mathscr{C}_{m,n}$ is a dense subsemigroup of a Hausdorff semitopological semigroup $(S,\cdot)$ and $I=S\setminus\mathscr{C}_{m,n}\neq\varnothing$ then $I$ is a two-sided ideal of the semigroup $S$ and show that for arbitrary non-negative integers $m$, $n$, any Hausdorff locally compact semitopological semigroup $\mathscr{C}_{m,n}^0=\mathscr{C}_{m,n}\sqcup\{0\}$ is either discrete or compact.
\end{abstract}

\maketitle

We shall follow the terminology of \cite{Carruth-Hildebrant-Koch-1983-1986, Clifford-Preston-1961-1967, Engelking-1989, Ruppert-1984}. In this paper all spaces will be assumed to be Hausdorff. By $\mathbb{N}_0$ and $\mathbb{N}$ we denote the sets of non-negative integers and positive integers, respectively. If $A$ is a subset of a topological space $X$  then by $\operatorname{cl}_X(A)$ and $\operatorname{int}_X(A)$ we denote the closure and interior of $A$ in $X$, respectively.

A \emph{semigroup} is a non-empty set with a binary associative
operation.

The \emph{bicyclic semigroup} (or the \emph{bicyclic monoid}) ${\mathscr{C}}(p,q)$ is the semigroup with
the identity $1$ generated by two elements $p$ and $q$ subject
only to the condition $pq=1$. The bicyclic monoid ${\mathscr{C}}(p,q)$ is a combinatorial bisimple $F$-inverse semigroup (see \cite{Lawson-1998}) and it plays an important role in the algebraic theory of semigroups and in the theory of topological semigroups.
For example the well-known O.~Andersen's result~\cite{Andersen-1952}
states that a ($0$--)simple semigroup is completely ($0$--)simple
if and only if it does not contain the bicyclic semigroup. The
bicyclic semigroup cannot be embedded into the stable
semigroups~\cite{Koch-Wallace-1957}.

An interassociate of a semigroup $(S,\cdot)$ is a semigroup $(S,\ast)$ such that for all $a,b,c\in S$, $a\cdot(b\ast c)=(a\cdot b)\ast c$ and $a\ast(b\cdot c)=(a\ast b)\cdot c$. This definition of interassociativity was studied extensively in 1996 by Boyd et al \cite{Boyd-Gould-Nelson-1998}. Certain classes of semigroups are known to give rise to interassociates with various properties. For example, it is very easy to show that if $S$ is a monoid, every interassociate must satisfy the condition $a\ast b = acb$ for some fixed element $c\in S$ (see \cite{Boyd-Gould-Nelson-1998}). This type of interassociate
was called a variant by Hickey \cite{Hickey-1983}. In addition, every interassociate of a completely simple semigroup is completely simple \cite{Boyd-Gould-Nelson-1998}. Finally, it is relatively easy to show that every interassociate of a group is isomorphic to the group itself.

In the paper \cite{Givens-Rosin-Linton-2016??} the bicyclic semigroup ${\mathscr{C}}(p,q)$ and its interassociates are investigated. In particular, if $p$ and $q$ are generators of the bicyclic semigroup ${\mathscr{C}}(p,q)$ and $m$ and $n$ are fixed nonnegative integers, the operation $a\ast_{m,n}b=aq^mp^nb$ is known to be an interassociate. It was shown that for distinct pairs $(m, n)$ and $(s, t)$, the interassociates $({\mathscr{C}}(p,q),\ast_{m,n})$ and $(\mathscr{C}(p,q),\ast_{s,t})$ are not isomorphic. Also in \cite{Givens-Rosin-Linton-2016??} the authors generalized a result regarding homomorphisms on ${\mathscr{C}}(p,q)$ to homomorphisms on its interassociates.

Later for fixed non-negative integers $m$ and $n$ the interassociate $({\mathscr{C}}(p,q),\ast_{m,n})$ of the bicyclic monoid $\mathscr{C}(p,q)$ will be denoted by $\mathscr{C}_{m,n}$.

A ({\it semi})\emph{topological semigroup} is a topological space with a (separately) continuous semigroup operation.

The bicyclic semigroup admits only the discrete semigroup topology and if a topological semigroup $S$ contains it as a dense subsemigroup then ${\mathscr{C}}(p,q)$ is an open subset of $S$~\cite{Eberhart-Selden-1969}. Bertman and  West in \cite{Bertman-West-1976} extend this result for the case of Hausdorff semitopological semigroups. Stable and $\Gamma$-compact topological semigroups do not contain the bicyclic semigroup~\cite{Anderson-Hunter-Koch-1965, Hildebrant-Koch-1986}. The problem of an embedding of the bicyclic monoid into compact-like topological semigroups studied in \cite{Banakh-Dimitrova-Gutik-2009, Banakh-Dimitrova-Gutik-2010, Gutik-Repovs-2007}. Also in the paper \cite{Fihel-Gutik-2011} it was proved that the discrete topology is the unique topology on the extended bicyclic semigroup $\mathscr{C}_{\mathbb{Z}}$ such that the semigroup operation on $\mathscr{C}_{\mathbb{Z}}$ is separately continuous. Amazing dichotomy for the bicyclic monoid with adjoined zero $\mathscr{C}^0={\mathscr{C}}(p,q)\sqcup\{0\}$ was proved in \cite{Gutik-2015}: every Hausdorff locally compact semitopological bicyclic semigroup with adjoined zero $\mathscr{C}^0$ is either compact or discrete.

In this paper we study semitopological interassociates $({\mathscr{C}}(p,q),\ast_{m,n})$ of the bicyclic monoid $\mathscr{C}(p,q)$ for arbitrary non-negative integers $m$ and $n$. Some results from \cite{Bertman-West-1976, Eberhart-Selden-1969, Gutik-2015} obtained for the bicyclic semigroup are extended to its interassociate $({\mathscr{C}}(p,q),\ast_{m,n})$. In particular, we show that for arbitrary non-negative integers $m$, $n$ and every Hausdorff topology $\tau$ on $\mathscr{C}_{m,n}$ such that $\left(\mathscr{C}_{m,n},\tau\right)$ is a semitopological semigroup, is discrete. Also, we prove that if an  interassociate of the bicyclic monoid $\mathscr{C}_{m,n}$ is a dense subsemigroup of a Hausdorff semitopological semigroup $(S,\cdot)$ and $I=S\setminus\mathscr{C}_{m,n}\neq\varnothing$ then $I$ is a two-sided ideal of the semigroup $S$ and show that for arbitrary non-negative integers $m$, $n$, any Hausdorff locally compact semitopological semigroup $\mathscr{C}_{m,n}^0$ ($\mathscr{C}_{m,n}^0=\mathscr{C}_{m,n}\sqcup\{0\}$) is either discrete or compact.


\bigskip

For arbitrary $m,n\in N$ we denote
\begin{equation*}
\mathscr{C}_{m,n}^*=\left\{q^{n+k}p^{m+l}\in\mathscr{C}_{m,n}\colon k.l\in \mathbb{N}_0\right\}.
\end{equation*}
The semigroup operation $\ast_{m,n}$ of $\mathscr{C}_{m,n}$ implies that $\mathscr{C}_{m,n}^*$ is a subsemigroup of $\mathscr{C}_{m,n}$.

We need the following trivial lemma.

\begin{lemma}\label{lemma-1}
For arbitrary non-negative integers $m$ and $n$ the subsemigroup $\mathscr{C}_{m,n}^*$ of $\mathscr{C}_{m,n}$ is isomorphic to the bicyclic semigroup $\mathscr{C}(p,q)$ under the map $\iota\colon\mathscr{C}(p,q)\to \mathscr{C}_{m,n}^*\colon q^ip^j\mapsto q^{n+i}p^{m+j}$, $i,j\in \mathbb{N}_0$.
\end{lemma}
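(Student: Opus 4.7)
The plan is to verify directly that the map $\iota$ is a bijective homomorphism. Bijectivity is immediate: every element of $\mathscr{C}_{m,n}^*$ is by definition of the form $q^{n+k}p^{m+l}$ with $k,l\in\mathbb{N}_0$, so $\iota$ is surjective, and the unique normal-form representation $q^ip^j$ of elements in $\mathscr{C}(p,q)$ (together with the same uniqueness on the target) gives injectivity. So the only real content is the homomorphism identity
\begin{equation*}
\iota(q^ip^j\cdot q^kp^l)=\iota(q^ip^j)\ast_{m,n}\iota(q^kp^l).
\end{equation*}

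To handle the right-hand side, I would expand using the definition of $\ast_{m,n}$:
\begin{equation*}
\iota(q^ip^j)\ast_{m,n}\iota(q^kp^l)=q^{n+i}p^{m+j}\cdot q^mp^n\cdot q^{n+k}p^{m+l},
\end{equation*}
where the multiplication on the right is now the ordinary bicyclic multiplication. The key simplification is the pair of bicyclic identities $p^{m+j}q^m=p^j$ and $p^nq^{n+k}=q^k$, which both follow from $pq=1$. Substituting these collapses the expression to $q^{n+i}\cdot p^jq^k\cdot p^{m+l}$.

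On the left-hand side, the product $q^ip^j\cdot q^kp^l$ in the bicyclic semigroup equals $q^i(p^jq^k)p^l$, and applying $\iota$ to this gives $q^{n+i}(p^jq^k)p^{m+l}$, which matches the right-hand side exactly. This is the whole argument, and there is no real obstacle: the only thing to notice is that the factors $q^mp^n$ inserted by the interassociate operation $\ast_{m,n}$ are precisely absorbed by the shifts $q^n\!\cdot$ and $\cdot\, p^m$ built into the definition of $\iota$, which is why the $(m,n)$-shift in $\iota$ was chosen in this way.
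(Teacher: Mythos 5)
Your proof is correct and follows essentially the same route as the paper: both verify bijectivity from the normal forms and then check the homomorphism identity by expanding $q^{n+i}p^{m+j}\cdot q^mp^n\cdot q^{n+k}p^{m+l}$ and absorbing the inserted factor via $p^{m+j}q^m=p^j$ and $p^nq^{n+k}=q^k$. The only difference is cosmetic: the paper finishes with an explicit two-case reduction of $p^jq^k$ (according to $j<k$ or $j\geqslant k$), whereas you observe that this reduction is the same on both sides so the cases need not be written out --- a step that is valid, though it implicitly uses that the outer shifts $q^n\cdot$ and $\cdot p^m$ commute with reducing $p^jq^k$ to normal form.
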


\begin{proof}
It is sufficient to show that the map $\iota\colon\mathscr{C}(p,q)\to \mathscr{C}_{m,n}^*$ is a homomorphism, because $\iota$ is bijective. Then for arbitrary $i,j,k,l\in \mathbb{N}_0$ we have that
\begin{equation*}
  \iota(q^i p^j\cdot q^k p^l)=
  \left\{
    \begin{array}{ll}
      \iota(q^{i-j+k}p^l), & \hbox{if~} j<k;\\
      \iota(q^ip^{j-k+l}), & \hbox{if~} j\geqslant k
    \end{array}
  \right.=
  \left\{
    \begin{array}{ll}
      q^{n+i-j+k}p^{m+l}, & \hbox{if~} j<k;\\
      q^{n+i}p^{m+j-k+l}, & \hbox{if~} j\geqslant k
    \end{array}
  \right.
\end{equation*}
and
\begin{equation*}
\begin{split}
  \iota(q^ip^j)\ast_{m,n} \iota(q^kp^l) & =q^{n+i}p^{m+j}\ast_{m,n} q^{n+k}p^{m+l}=\\
    & = q^{n+i}p^{m+j}\cdot q^mp^n\cdot q^{n+k}p^{m+l}=\\
    & =
  q^{n+i}p^j\cdot q^kp^{m+l}= \\
    & =
      \left\{
    \begin{array}{ll}
      q^{n+i-j+k}p^{m+l}, & \hbox{if~} j<k;\\
      q^{n+i}p^{m+j-k+l}, & \hbox{if~} j\geqslant k,
    \end{array}
  \right.
\end{split}
\end{equation*}
which completes the proof of the lemma.
\end{proof}

Lemma~I.1 from \cite{Eberhart-Selden-1969} and the definition of the semigroup operation in $\mathscr{C}_{m,n}$ imply the following:

\begin{lemma}\label{lemma-2}
For arbitrary non-negative integers $m$ and $n$  and for each elements $a,b\in\mathscr{C}_{m,n}$ both sets
\begin{equation*}
  \left\{x\in\mathscr{C}_{m,n}\colon a\ast_{m,n}x=b\right\} \qquad \hbox{and} \qquad \left\{x\in\mathscr{C}_{m,n}\colon x\ast_{m,n}a=b\right\}
\end{equation*}
are finite; that is, both left  and right translation by $a$ are finite-to-one maps.
\end{lemma}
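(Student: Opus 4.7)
My strategy is to reduce each of the two equations in $\mathscr{C}_{m,n}$ to a corresponding equation in the bicyclic monoid $\mathscr{C}(p,q)$ and then invoke Lemma~I.1 of~\cite{Eberhart-Selden-1969}. First I would unfold the definition of $\ast_{m,n}$ to write
\begin{equation*}
  a\ast_{m,n}x=(a\cdot q^mp^n)\cdot x \qquad\text{and}\qquad x\ast_{m,n}a=x\cdot(q^mp^n\cdot a),
\end{equation*}
where $\cdot$ denotes the original bicyclic multiplication on $\mathscr{C}(p,q)$. Setting $c=a\cdot q^mp^n\in\mathscr{C}(p,q)$ and $d=q^mp^n\cdot a\in\mathscr{C}(p,q)$, and recalling that $\mathscr{C}_{m,n}$ and $\mathscr{C}(p,q)$ share the same underlying set, the two solution sets appearing in the statement of the lemma coincide respectively with
\begin{equation*}
  \{x\in\mathscr{C}(p,q)\colon c\cdot x=b\} \qquad\text{and}\qquad \{x\in\mathscr{C}(p,q)\colon x\cdot d=b\}.
\end{equation*}

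Lemma~I.1 of Eberhart and Selden asserts that in the bicyclic monoid every left and every right translation is a finite-to-one map; applying this lemma to the fixed elements $c$ and $d$ immediately yields finiteness of both sets above, and the lemma follows at once.

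Since the entire content of the statement is already encoded in the quoted Eberhart--Selden fact, there is no genuine obstacle in the argument: the only step to verify is the purely formal manipulation in the first display, which is immediate from the defining identity $a\ast_{m,n}b=a\cdot q^mp^n\cdot b$. The conceptual point is simply that multiplication by the fixed element $q^mp^n$ converts each $\ast_{m,n}$-translation on $\mathscr{C}_{m,n}$ into an ordinary bicyclic translation by an auxiliary element, and translations in $\mathscr{C}(p,q)$ are already known to be finite-to-one.
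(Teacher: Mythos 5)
Your proposal is correct and is precisely the argument the paper intends: the paper states Lemma~\ref{lemma-2} follows from Lemma~I.1 of Eberhart--Selden together with the definition of $\ast_{m,n}$, and your reduction $a\ast_{m,n}x=(a\cdot q^mp^n)\cdot x$ and $x\ast_{m,n}a=x\cdot(q^mp^n\cdot a)$ simply makes that deduction explicit. No issues.
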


The following theorem generalizes  the Eberhart--Selden result on semigroup topologization of the bicyclic semigroup (see \cite[Corollary~I.1]{Eberhart-Selden-1969}) and the corresponding statement for the case semitopological semigroups in \cite{Bertman-West-1976}.

\begin{theorem}\label{theorem-3}
For arbitrary non-negative integers $m$, $n$, every Hausdorff semitopological semigroup $\left(\mathscr{C}_{m,n},\tau\right)$ is discrete.
\end{theorem}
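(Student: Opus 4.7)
The plan is to reduce to the known case of the bicyclic semigroup. By Lemma~\ref{lemma-1}, $\mathscr{C}_{m,n}^*$ is isomorphic as a semigroup to the bicyclic monoid $\mathscr{C}(p,q)$, and with the subspace topology inherited from $(\mathscr{C}_{m,n},\tau)$ it is itself a Hausdorff semitopological semigroup. The Bertman--West theorem~\cite{Bertman-West-1976} therefore forces $\mathscr{C}_{m,n}^*$ to carry the discrete topology, so for each $y\in\mathscr{C}_{m,n}^*$ one can find an open set $V_y\subseteq\mathscr{C}_{m,n}$ with $V_y\cap\mathscr{C}_{m,n}^*=\{y\}$. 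The remaining task is to transfer this isolation to arbitrary points of $\mathscr{C}_{m,n}$, and I intend to do this by means of a single two-sided translation whose image lies entirely inside $\mathscr{C}_{m,n}^*$.

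The decisive step is to introduce the element $e_0=q^np^m$, which by Lemma~\ref{lemma-1} is the identity of $\mathscr{C}_{m,n}^*$ (and hence an idempotent of $(\mathscr{C}_{m,n},\ast_{m,n})$), and to work with the map
\begin{equation*}
f\colon\mathscr{C}_{m,n}\to\mathscr{C}_{m,n},\qquad f(x)=e_0\ast_{m,n} x\ast_{m,n} e_0=q^np^n\cdot x\cdot q^mp^m,
\end{equation*}
the last equality following from $p^mq^m=p^nq^n=1$ in $\mathscr{C}(p,q)$. The key step, and the main (though still elementary) obstacle, will be to verify the inclusion $f(\mathscr{C}_{m,n})\subseteq\mathscr{C}_{m,n}^*$. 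Writing a generic element in the bicyclic normal form $z=q^\alpha p^\beta$ and splitting on whether $\alpha\leqslant n$ or $\alpha>n$, one checks that $q^np^nz$ has first exponent at least $n$ and second exponent not less than $\beta$; the symmetric computation shows that $zq^mp^m$ has second exponent at least $m$ and first exponent not less than $\alpha$. Applying these two observations in succession to $x=q^\alpha p^\beta$ gives $f(x)\in\mathscr{C}_{m,n}^*$ in every case.

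With this in place, I fix an arbitrary $x\in\mathscr{C}_{m,n}$ and set $y=f(x)\in\mathscr{C}_{m,n}^*$. Separate continuity of $\ast_{m,n}$ makes $f=L_{e_0}\circ R_{e_0}$ continuous, so $f^{-1}(V_y)$ is an open neighbourhood of $x$; but since $f(\mathscr{C}_{m,n})\subseteq\mathscr{C}_{m,n}^*$ and $V_y\cap\mathscr{C}_{m,n}^*=\{y\}$, this preimage coincides with $f^{-1}(\{y\})$, which is finite by two applications of Lemma~\ref{lemma-2}. Hence $x$ lies in a finite open subset of the Hausdorff space $\mathscr{C}_{m,n}$, and the usual Hausdorff separation argument (intersecting with open neighbourhoods that avoid the finitely many other points) carves out an open set equal to $\{x\}$. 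Thus every singleton of $\mathscr{C}_{m,n}$ is open and $\tau$ is the discrete topology.
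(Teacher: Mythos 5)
Your proof is correct. It shares the paper's opening move (apply the Bertman--West theorem to the copy $\mathscr{C}_{m,n}^*$ of the bicyclic monoid sitting inside $(\mathscr{C}_{m,n},\tau)$, so that each $y\in\mathscr{C}_{m,n}^*$ has an open $V_y$ with $V_y\cap\mathscr{C}_{m,n}^*=\{y\}$) and its closing move (a finite open neighbourhood in a Hausdorff space yields an isolated point, with finiteness coming from Lemma~\ref{lemma-2}), but the middle is genuinely different. The paper first upgrades the isolation of $q^np^m$ from the subspace $\mathscr{C}_{m,n}^*$ to the whole space by a contradiction argument: an infinite neighbourhood $V(q^np^m)$ would have to contain infinitely many elements $q^{i_0}p^l$ with $i_0<n$ or $q^lp^{j_0}$ with $j_0<m$, and translating these by $q^np^m$ escapes $U(q^np^m)$ for large $l$; it then spreads isolation to an arbitrary point $u$ via simplicity of $\mathscr{C}_{m,n}$, solving $x_u u y_u=q^np^m$. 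You bypass both of these steps by observing that the two-sided translation $f(x)=q^np^m\ast_{m,n}x\ast_{m,n}q^np^m=q^np^nxq^mp^m$ maps all of $\mathscr{C}_{m,n}$ into $\mathscr{C}_{m,n}^*$ (your exponent computation checks out), so that $f^{-1}(V_{f(x)})=f^{-1}(\{f(x)\})$ is automatically a finite open neighbourhood of $x$. This buys a shorter and arguably cleaner argument: no case analysis on infinite neighbourhoods and no appeal to simplicity, only separate continuity of the two fixed translations and the finite-to-one property of Lemma~\ref{lemma-2}. The paper's route, on the other hand, makes explicit which concrete products witness the failure of separate continuity at a hypothetical non-isolated point, which is in the same spirit as the Eberhart--Selden and Bertman--West proofs it generalizes.
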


\begin{proof}
By Proposition~1 of \cite{Bertman-West-1976} every Hausdorff semitopological semigroup $\mathscr{C}(p,q)$ is discrete. Hence Lemma~\ref{lemma-1} implies that for any element $x\in\mathscr{C}_{m,n}^*$ there exists an open neighbourhood $U(x)$ of the point $x$ in $\left(\mathscr{C}_{m,n},\tau\right)$ such that $U(x)\cap \mathscr{C}_{m,n}^*=\left\{x\right\}$. Fix an arbitrary open neighbourhood $U(q^np^m)$ of the point $q^np^m$ in $\left(\mathscr{C}_{m,n},\tau\right)$ such that $U(q^np^m)\cap \mathscr{C}_{m,n}^*=\left\{q^np^m\right\}$. Then the separate continuity of the semigroup operation in $\left(\mathscr{C}_{m,n},\tau\right)$ implies that there exists an open neighbourhood $V(q^np^m)\subseteq U(q^np^m)$ of the point $q^np^m$ in the space $\left(\mathscr{C}_{m,n},\tau\right)$ such that
\begin{equation*}
  V(q^np^m)\ast_{m,n} q^np^m\subseteq U(q^np^m) \qquad \hbox{and} \qquad q^np^m\ast_{m,n} V(q^np^m)\subseteq U(q^np^m).
\end{equation*}
Suppose to the contrary that the neighbourhood $V(q^np^m)$ is an infinite set. Then at least one of the following conditions holds:
\begin{itemize}
  \item[$(i)$] there exists a non-negative integer $i_0<n$ such that the set $A=\left\{q^{i_0}p^l\colon l\in N\right\}\cap V(q^np^m)$ is infinite;
  \item[$(ii)$] there exists a non-negative integer $j_0<m$ such that the set $B=\left\{q^{l}p^{j_0}\colon l\in N\right\}\cap V(q^np^m)$ is infinite.
\end{itemize}
In case $(i)$ for arbitrary $q^{i_0}p^l\in A$ we have that
\begin{equation*}
\begin{split}
  q^np^m\ast_{m,n}q^{i_0}p^l & =q^np^mq^mp^nq^{i_0}p^l=q^np^nq^{i_0}p^l= \\
    & = q^np^{n-i_0+l}\notin U(q^np^m) \quad \hbox{~for sufficiently large~} l;
\end{split}
\end{equation*}
and similarly in case $(ii)$ we obtain that
\begin{equation*}
\begin{split}
  q^{l}p^{j_0}\ast_{m,n} q^np^m & =q^{l}p^{j_0} q^mp^nq^np^m=q^{l}p^{j_0} q^mp^m= \\
    & =  q^{m-j_0+l}p^m\notin U(q^np^m) \quad \hbox{~for sufficiently large~} l;
\end{split}
\end{equation*}
for each $q^{l}p^{j_0}\in B$, which contradicts the separate continuity of the semigroup operation in $\left(\mathscr{C}_{m,n},\tau\right)$. The obtained contradiction implies that $q^np^m$ is an isolated point in the space $\left(\mathscr{C}_{m,n},\tau\right)$.

Now, since the semigroup $\mathscr{C}_{m,n}$ is simple (see \cite[Section~2]{Givens-Rosin-Linton-2016??}) for arbitrary $a,b\in \mathscr{C}_{m,n}$ there exist $x,y\in \mathscr{C}_{m,n}$ such that $xay=b$. The above argument implies that for arbitrary element $u\in \mathscr{C}_{m,n}$ there exist $x_u,y_u\in \mathscr{C}_{m,n}$ such that $x_u uy_u=q^np^m$. Now, by Lemma~\ref{lemma-2} we get that the equation $x_uxy_u=q^np^m$ has finitely many solutions. This and the separate continuity of the semigroup operation in $\left(\mathscr{C}_{m,n},\tau\right)$ imply that the point $u$ has an open finite neighbourhood in $\left(\mathscr{C}_{m,n},\tau\right)$, and hence, by the Hausdorffness of $\left(\mathscr{C}_{m,n},\tau\right)$,  $u$ is an isolated point in $\left(\mathscr{C}_{m,n},\tau\right)$. Then the choice of $u$ implies that all elements of the semigroup $\mathscr{C}_{m,n}$ are isolated points in $\left(\mathscr{C}_{m,n},\tau\right)$.
\end{proof}

The following theorem generalizes Theorem~I.3 from \cite{Eberhart-Selden-1969}.

\begin{theorem}\label{theorem-4}
If $m$ and $n$ are arbitrary non-negative integers, the interassociate $\mathscr{C}_{m,n}$ of the bicyclic monoid ${\mathscr{C}}(p,q)$ is a dense subsemigroup of a Hausdorff semitopological semigroup $(S,\cdot)$, and $I=S\setminus\mathscr{C}_{m,n}\neq\varnothing$ then $I$ is a two-sided ideal of the semigroup $S$.
\end{theorem}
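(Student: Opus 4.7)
The plan is to show, by contradiction, that every product $s\cdot a$ or $a\cdot s$ involving an element $a\in I$ lands again in $I$. Two preliminary observations carry most of the weight. First, the multiplication inherited by $\mathscr{C}_{m,n}\subseteq S$ coincides with $\ast_{m,n}$ and is separately continuous in the subspace topology, so by Theorem~\ref{theorem-3} the subspace topology on $\mathscr{C}_{m,n}$ is discrete; hence for every $c\in\mathscr{C}_{m,n}$ there exists an open $U\subseteq S$ with $U\cap\mathscr{C}_{m,n}=\{c\}$. Second, the density of $\mathscr{C}_{m,n}$ in the Hausdorff space $S$ together with $I\neq\varnothing$ forces every $a\in I$ to be a limit point of $\mathscr{C}_{m,n}$, so every open neighbourhood of every point of $I$ meets $\mathscr{C}_{m,n}$ in an infinite set.

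Suppose for contradiction that $s\cdot a=c\in\mathscr{C}_{m,n}$ with at least one of $s,a$ in $I$, and fix $U$ as above. I aim to produce a single $v\in\mathscr{C}_{m,n}$ with $v\cdot a\in U$, together with infinitely many $w\in\mathscr{C}_{m,n}$ satisfying $v\cdot w\in U$. Since then $v\cdot w\in U\cap\mathscr{C}_{m,n}=\{c\}$, all such $w$ solve $v\ast_{m,n} w=c$, contradicting Lemma~\ref{lemma-2}. To obtain $v$: if $s\in\mathscr{C}_{m,n}$, take $v=s$ and the condition $v\cdot a=c\in U$ is automatic; if instead $s\in I$, continuity of the right translation $R_a$ makes $R_a^{-1}(U)$ an open neighbourhood of $s$, which meets $\mathscr{C}_{m,n}$ by the second observation above. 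To obtain the $w$'s: if $a\in I$, continuity of $L_v$ gives an open neighbourhood $L_v^{-1}(U)$ of $a$, and the second observation again supplies infinitely many $w\in L_v^{-1}(U)\cap\mathscr{C}_{m,n}$; if instead $a\in\mathscr{C}_{m,n}$ (and hence $s\in I$), I would symmetrise the roles and use $R_a^{-1}(U)\cap\mathscr{C}_{m,n}$ directly to yield infinitely many $v$ with $v\ast_{m,n}a=c$, which already contradicts Lemma~\ref{lemma-2} on the first coordinate.

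The genuinely new case beyond the classical Eberhart--Selden setting is $s,a\in I$, where separate continuity alone does not give simultaneous neighbourhoods $V$ of $s$ and $W$ of $a$ with $V\cdot W\subseteq U$. I expect the main obstacle to be precisely this step, and the trick that makes it work is the two-stage reduction above: first use continuity of $R_a$ and density to pass from $s\in I$ to a concrete $v\in\mathscr{C}_{m,n}$ with $v\cdot a\in U$, and only then apply continuity of $L_v$ together with the limit-point property of $a$ to extract the infinite family of $w$'s needed to invoke Lemma~\ref{lemma-2}. Once this is in place, all three inclusions $\mathscr{C}_{m,n}\cdot I\subseteq I$, $I\cdot\mathscr{C}_{m,n}\subseteq I$, and $I\cdot I\subseteq I$ follow from the same single contradiction, and combining them with $S=\mathscr{C}_{m,n}\sqcup I$ shows that $I$ is a two-sided ideal of $(S,\cdot)$.
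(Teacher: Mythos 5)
Your proof is correct and follows essentially the same route as the paper's: discreteness of $\mathscr{C}_{m,n}$ via Theorem~\ref{theorem-3}, the observation that every neighbourhood of a point of $I$ meets the dense discrete subsemigroup $\mathscr{C}_{m,n}$ in an infinite set, and the finite-to-one translations of Lemma~\ref{lemma-2}. The only (harmless) divergence is in the case $s,a\in I$: the paper first establishes $I\cdot\mathscr{C}_{m,n}\subseteq I$ and then contradicts $\{s\}\cdot\bigl(U(a)\cap\mathscr{C}_{m,n}\bigr)\subseteq I$, whereas you reduce that case directly to Lemma~\ref{lemma-2} by your two-stage approximation (first replace $s$ by some $v\in R_a^{-1}(U)\cap\mathscr{C}_{m,n}$, then harvest infinitely many $w\in L_v^{-1}(U)\cap\mathscr{C}_{m,n}$); both arguments are sound.
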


\begin{proof}
Fix an arbitrary element $y\in I$. If $x\cdot y=z\notin I$ for some $x\in\mathscr{C}_{m,n}$ then there exists an open neighbourhood $U(y)$ of the point $y$ in the space $S$ such that $\{x\}\cdot U(y)=\{z\}\subset\mathscr{C}_{m,n}$. The neighbourhood $U(y)$ contains infinitely many elements of the semigroup $\mathscr{C}_{m,n}$ which contradicts Lemma~\ref{lemma-2}. The obtained contradiction implies that $x\cdot y\in I$ for all $x\in \mathscr{C}_{m,n}$ and $y\in I$. The proof of the statement that $y\cdot x\in I$ for all $x\in\mathscr{C}_{m,n}$ and $y\in I$ is similar.

Suppose to the contrary that $x\cdot y=w\notin I$ for some $x,y\in I$. Then $w\in \mathscr{C}_{m,n}$ and the separate continuity of the semigroup operation in $S$ implies that there exist open neighbourhoods $U(x)$ and $U(y)$ of the points $x$ and $y$ in $S$, respectively, such that $\{x\}\cdot U(y)=\{w\}$ and $U(x)\cdot \{y\}=\{w\}$. Since both neighbourhoods $U(x)$ and $U(y)$ contain infinitely many elements of the semigroup $\mathscr{C}_{m,n}$, both equalities $\{x\}\cdot U(y)=\{w\}$ and $U(x)\cdot \{y\}=\{w\}$ contradict the mentioned above part of the proof, because $\{x\}\cdot \left(U(y)\cap\mathscr{C}_{m,n}\right)\subseteq I$. The obtained contradiction implies that $x\cdot y\in I$.
\end{proof}

We recall that a topological space X is said to be:
\begin{itemize}
    \item \emph{compact} if every open cover of $X$ contains a finite subcover;
    \item \emph{countably compact} if each closed discrete subspace of $X$ is finite;
    \item \emph{feebly compact} if each locally finite open cover of $X$ is finite;
    \item \emph{pseudocompact} if $X$ is Tychonoff and each continuous real-valued function on $X$ is bounded;
    \item \emph{locally compact} if every point $x$ of $X$ has an open neighbourhood $U(x)$ with the compact closure $\operatorname{cl}_X(U(x))$;
    \item \emph{\v{C}ech-complete} if $X$ is Tychonoff and there exists a compactification $cX$ of $X$ such that the remainder of $X$ is an $F_\sigma$-set in $cX$.
\end{itemize}
According to Theorem~3.10.22 of \cite{Engelking-1989}, a Tychonoff topological space $X$ is feebly compact if and only if $X$ is pseudocompact. Also, a Hausdorff topological space $X$ is feebly compact if and only if every locally finite family of non-empty open subsets of $X$ is finite. Every compact space and every sequentially compact space are countably compact, every countably compact space is feebly compact (see \cite{Arkhangelskii-1992}).

A topological semigroup $S$ is called
\emph{$\Gamma$-compact} if for every $x\in S$ the closure of the set
$\{x,x^2,x^3,\ldots\}$ is compact in $S$ (see
\cite{Hildebrant-Koch-1986}). Since by Lemma~\ref{lemma-1} the semigroup
$\mathscr{C}_{m,n}$ contains the bicyclic semigroup as a subsemigroup the results obtained in
\cite{Anderson-Hunter-Koch-1965}, \cite{Banakh-Dimitrova-Gutik-2009},
\cite{Banakh-Dimitrova-Gutik-2010}, \cite{Gutik-Repovs-2007},
\cite{Hildebrant-Koch-1986} imply the following corollary

\begin{corollary}\label{corollary-5}
Let $m$ and $n$ be arbitrary non-negative integers. If a Hausdorff topological semigroup $S$ satisfies one of the following conditions:
\begin{itemize}
  \item[$(i)$] $S$ is compact;
  \item[$(ii)$] $S$ is $\Gamma$-compact;
  \item[$(iii)$] the square $S\times S$ is countably compact; or
  \item[$(iv)$] the square $S\times S$ is a Tychonoff pseudocompact space,
\end{itemize}
then $S$ does not contain the semigroup $\mathscr{C}_{m,n}$.
\end{corollary}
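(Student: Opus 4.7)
The plan is a direct reduction to the corresponding known results for the bicyclic semigroup $\mathscr{C}(p,q)$ via Lemma~\ref{lemma-1}. Suppose, towards a contradiction, that one of the Hausdorff topological semigroups $S$ satisfying (i)--(iv) contains $\mathscr{C}_{m,n}$ as a subsemigroup. Since $\mathscr{C}_{m,n}^*\subseteq\mathscr{C}_{m,n}\subseteq S$ is a subsemigroup and Lemma~\ref{lemma-1} gives an isomorphism $\iota\colon\mathscr{C}(p,q)\to\mathscr{C}_{m,n}^*$, the bicyclic semigroup $\mathscr{C}(p,q)$ embeds into $S$ as a subsemigroup.

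Now I would dispatch each of the four cases by invoking the corresponding result already recorded in the introduction. In case (i), compactness of $S$ contradicts the Anderson--Hunter--Koch theorem from \cite{Anderson-Hunter-Koch-1965} (or the fact that a compact topological semigroup is stable, together with \cite{Koch-Wallace-1957}), which forbids $\mathscr{C}(p,q)$ as a subsemigroup. In case (ii), $\Gamma$-compactness of $S$ contradicts the Hildebrant--Koch result \cite{Hildebrant-Koch-1986}. In cases (iii) and (iv), countable compactness, respectively Tychonoff pseudocompactness, of $S\times S$ contradicts the embedding theorems of \cite{Banakh-Dimitrova-Gutik-2009, Banakh-Dimitrova-Gutik-2010, Gutik-Repovs-2007} which state that the bicyclic semigroup cannot be contained in a Hausdorff topological semigroup whose square is countably compact or (Tychonoff) pseudocompact.

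There is essentially no obstacle here beyond invoking the right reference: the main content of the corollary is the observation recorded by Lemma~\ref{lemma-1} that an algebraic copy of the bicyclic semigroup sits inside every $\mathscr{C}_{m,n}$, which transports the non-embedding statements from $\mathscr{C}(p,q)$ to all its interassociates $\mathscr{C}_{m,n}$. The only point requiring minor care is that the cited results are formulated for containment as a (not necessarily closed) subsemigroup of $S$, which is exactly what the above reduction yields, so no topological hypothesis on the embedding of $\mathscr{C}_{m,n}$ into $S$ is needed beyond the algebraic one.
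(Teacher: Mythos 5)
Your argument is exactly the one the paper uses: Lemma~\ref{lemma-1} supplies a copy of the bicyclic semigroup $\mathscr{C}_{m,n}^*\cong\mathscr{C}(p,q)$ inside $\mathscr{C}_{m,n}$, and each of the four cases is then dispatched by the corresponding non-embedding result cited in \cite{Anderson-Hunter-Koch-1965, Hildebrant-Koch-1986, Banakh-Dimitrova-Gutik-2009, Banakh-Dimitrova-Gutik-2010, Gutik-Repovs-2007}. The proposal is correct and takes essentially the same approach as the paper.
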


\begin{proposition}\label{proposition-6}
Let $m$ and $n$ be arbitrary non-negative integers. Let $S$ be a Hausdorff topological semigroup which contains a dense subsemigroup $\mathscr{C}_{m,n}$. Then for every $c\in\mathscr{C}_{m,n}$ the set
\begin{equation*}
    D_c=\left\{(x,y)\in\mathscr{C}_{m,n}\times\mathscr{C}_{m,n}\colon x\ast_{m,n} y=c\right\}
\end{equation*}
is an open-and-closed subset of $S\times S$.
\end{proposition}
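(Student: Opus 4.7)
The plan is to verify that $D_c$ is both closed and open in $S\times S$, drawing on Theorems~\ref{theorem-3} and~\ref{theorem-4} together with joint continuity of the multiplication $\mu\colon S\times S\to S$. For closedness, I first identify $D_c$ with $\mu^{-1}(\{c\})$. Inclusion $D_c\subseteq\mu^{-1}(\{c\})$ is immediate. For the reverse, if $(a,b)\in\mu^{-1}(\{c\})$ and $a\in I$, then by Theorem~\ref{theorem-4} the ideal property forces $a\cdot b\in I$, contradicting $a\cdot b=c\in\mathscr{C}_{m,n}$; symmetrically $b\notin I$. Hence $a,b\in\mathscr{C}_{m,n}$ and $(a,b)\in D_c$. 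Since $S$ is Hausdorff, $\{c\}$ is closed, and continuity of $\mu$ then gives $D_c$ closed in $S\times S$.

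For openness, take $(x,y)\in D_c$. By Theorem~\ref{theorem-3} choose open $U\ni c$, $V_0\ni x$, $W_0\ni y$ in $S$ with $U\cap\mathscr{C}_{m,n}=\{c\}$, $V_0\cap\mathscr{C}_{m,n}=\{x\}$, $W_0\cap\mathscr{C}_{m,n}=\{y\}$; joint continuity then yields open $V\subseteq V_0$, $W\subseteq W_0$ with $V\cdot W\subseteq U$. For $(a,b)\in V\times W$ with $a,b\in\mathscr{C}_{m,n}$, the intersections force $a=x$, $b=y$, so $(a,b)\in D_c$; however, if $a\in V\cap I$ (say), Theorem~\ref{theorem-4} gives $a\cdot b\in I\cap U$, and $(a,b)\notin D_c$. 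Thus $V\times W\subseteq D_c$ precisely when $V\cap I=W\cap I=\varnothing$.

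The main obstacle will therefore be to establish that $\mathscr{C}_{m,n}$ is itself an open subset of $S$---the interassociate analogue of the Eberhart--Selden openness result~\cite[Theorem~I.3]{Eberhart-Selden-1969} for $\mathscr{C}(p,q)$. I expect this to follow from a joint-continuity argument at the idempotent $c_0=q^np^m$, which by Lemma~\ref{lemma-1} acts as a two-sided identity on $\mathscr{C}_{m,n}^*\cong\mathscr{C}(p,q)$, combined with Theorem~\ref{theorem-3} and the ideal property from Theorem~\ref{theorem-4}. Once each $x\in\mathscr{C}_{m,n}$ is isolated in $S$, the neighbourhoods $V,W$ above can be shrunk to $\{x\}$ and $\{y\}$, giving $V\times W=\{(x,y)\}\subseteq D_c$, so $D_c$ is open as a union of its isolated points; together with the closedness above this completes the proof.
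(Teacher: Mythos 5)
Your closedness argument is correct and in fact slightly tidier than the paper's: writing $\mu\colon S\times S\to S$ for the multiplication, you identify $D_c$ with the full preimage $\mu^{-1}(\{c\})$ by using Theorem~\ref{theorem-4} to rule out any factor lying in $I=S\setminus\mathscr{C}_{m,n}$, and closedness then follows from continuity of $\mu$ and Hausdorffness of $S$. (The paper argues instead with an accumulation point of $D_c$; the content is the same. You should only add a word on the trivial case $I=\varnothing$, since Theorem~\ref{theorem-4} is stated under the hypothesis $I\neq\varnothing$.)

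The openness half, however, has a genuine gap. Everything in your argument reduces to the claim that $\mathscr{C}_{m,n}$ is open in $S$ (equivalently, that each of its points is isolated in $S$), but you never prove this: you only write that you \emph{expect} it to follow from a joint-continuity argument at the idempotent $q^np^m$. That expectation is not a proof, and your intermediate construction with $U$, $V_0$, $W_0$ cannot close it, because Theorem~\ref{theorem-3} only controls the trace of these neighbourhoods on $\mathscr{C}_{m,n}$ and says nothing about their intersection with $I$ --- which is exactly the issue you yourself isolate. The missing step is available at no cost from general topology, and this is how the paper proceeds: by Theorem~\ref{theorem-3} the subspace $\mathscr{C}_{m,n}$ of $S$ is discrete, hence locally compact, and a dense locally compact subspace of a Hausdorff space is open (Engelking, Theorem~3.3.9). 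Once that is in place, $\mathscr{C}_{m,n}\times\mathscr{C}_{m,n}$ is an open discrete subspace of $S\times S$, so $D_c$, being a subset of it, is open, and your final sentence goes through.
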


\begin{proof}
By Theorem~\ref{theorem-3}, $\mathscr{C}_{m,n}$ is a discrete subspace of $S$ and hence Theorem~3.3.9 of \cite{Engelking-1989} implies that $\mathscr{C}_{m,n}$ is an open subspace of $S$. Then the continuity of the semigroup operation of $S$ implies that $D_c$ is an open subset of $S\times S$ for every $c\in\mathscr{C}_{m,n}$.

Suppose that there exists $c\in\mathscr{C}_{m,n}$ such that $D_c$ is a non-closed subset of $S\times S$. Then there exists an accumulation point $(a,b)\in S\times S$ of the set $D_c$. The continuity of the semigroup operation in $S$ implies that $a\cdot b=c$. But $\mathscr{C}_{m,n}\times \mathscr{C}_{m,n}$ is a
discrete subspace of $S\times S$ and hence by Theorem~\ref{theorem-4} the points $a$ and $b$ belong to the two-sided
ideal $I=S\setminus \mathscr{C}_{m,n}$ and hence the product $a\cdot b\in S\setminus \mathscr{C}_{m,n}$ cannot be equal to the element $c$.
\end{proof}

\begin{theorem}\label{theorem-7}
Let $m$ and $n$ be arbitrary non-negative integers. If a Hausdorff topological semigroup $S$ contains $\mathscr{C}_{m,n}$ as a dense subsemigroup then the square $S\times S$ is not feebly compact.
\end{theorem}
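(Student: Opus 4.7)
The plan is to use Proposition~\ref{proposition-6} to extract an infinite locally finite family of non-empty open subsets of $S\times S$. Fix any $c\in\mathscr{C}_{m,n}$. By Proposition~\ref{proposition-6} the set $D_c$ is clopen in $S\times S$, and since $\mathscr{C}_{m,n}$ is an open discrete subspace of $S$ (Theorem~\ref{theorem-3} together with Theorem~3.3.9 of \cite{Engelking-1989}), so is $\mathscr{C}_{m,n}\times\mathscr{C}_{m,n}$. In particular every singleton $\{(x,y)\}$ with $(x,y)\in D_c$ is open in $S\times S$.

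Next I would exhibit an explicit infinite sequence of pairs inside $D_c$. Writing $c=q^{i}p^{j}$, a short calculation using $p^{m}q^{m}=1$ and the standard bicyclic multiplication rule gives
\begin{equation*}
(q^{i}p^{m+l},\,q^{n+l}p^{j})\in D_{c}\qquad\text{for every }l\in\mathbb{N}_{0},
\end{equation*}
so $D_c$ is infinite, and consequently the family $\mathcal{F}=\{\{(x,y)\}:(x,y)\in D_c\}$ is an infinite family of pairwise disjoint non-empty open subsets of $S\times S$.

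Local finiteness of $\mathcal{F}$ in $S\times S$ comes for free from the closedness of $D_c$ in $S\times S$: a point $(x,y)\in D_c$ has the open neighbourhood $\{(x,y)\}$, which meets exactly one member of $\mathcal{F}$, while a point $(u,v)\in(S\times S)\setminus D_c$ has the open neighbourhood $(S\times S)\setminus D_c$, which is disjoint from every member of $\mathcal{F}$. Thus $S\times S$ carries an infinite locally finite family of non-empty open sets and cannot be feebly compact.

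The only step requiring any real computation is producing an explicit infinite subset of $D_c$; every other ingredient is an immediate repackaging of Proposition~\ref{proposition-6} together with the discreteness of $\mathscr{C}_{m,n}$ furnished by Theorem~\ref{theorem-3}, so no serious obstacle is anticipated.
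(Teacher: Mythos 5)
Your argument is correct and follows the paper's proof essentially verbatim: invoke Proposition~\ref{proposition-6} to get a clopen discrete $D_c$, exhibit an explicit infinite subset of $D_c$ (the paper uses the special case $c=q^np^m$ with the pairs $\left(q^np^{m+i},q^{n+i}p^m\right)$, of which your family for general $c$ is the obvious generalization, and your computation checks out), and conclude via the characterization of feeble compactness by locally finite families of non-empty open sets. The only difference is that you spell out the locally finite family explicitly where the paper leaves that final step implicit.
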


\begin{proof}
By Proposition~\ref{proposition-6} for every $c\in\mathscr{C}_{m,n}$ the square $S\times S$ contains an open-and-closed discrete subspace $D_c$.
In the case when $c=q^np^m$, the subspace $D_c$ contains an infinite subset $\left\{\left(q^np^{m+i},q^{n+i}p^m\right)\colon\right.$ $\left.i\in\mathbb{N}_0\right\}$ and hence $D_c$ is infinite. This implies that the square $S\times S$ is not feebly compact.
\end{proof}

For arbitrary non-positive integers $m$ and $n$ by $\mathscr{C}_{m,n}^0$ we denote the interassociate $\mathscr{C}_{m,n}$ with an adjoined zero $0$ of the bicyclic monoid ${\mathscr{C}}(p,q)$, i.e., $\mathscr{C}_{m,n}^0=\mathscr{C}_{m,n}\sqcup\left\{0\right\}$.

\begin{example}\label{example-7}
On the semigroup $\mathscr{C}_{m,n}^0$ we define a topology $\tau_{\operatorname{\textsf{Ac}}}$ in the following way:
\begin{itemize}
  \item[$(i)$] every element of the semigroup $\mathscr{C}_{m,n}$ is an isolated point in the space $\left(\mathscr{C}_{m,n}^0,\tau_{\operatorname{\textsf{Ac}}}\right)$;
  \item[$(ii)$] the family $\mathscr{B}(0)=\left\{U\subseteq \mathscr{C}_{m,n}^0\colon U\ni 0 \hbox{~and~} \mathscr{C}_{m,n}\setminus U \hbox{~is finite}\right\}$ determines a base of the topology $\tau_{\operatorname{\textsf{Ac}}}$ at zero $0\in\mathscr{C}_{m,n}^0$,
\end{itemize}
i.e., $\tau_{\operatorname{\textsf{Ac}}}$ is the topology of the Alexandroff one-point compactification of the discrete space $\mathscr{C}_{m,n}$ with the remainder $\{0\}$. The semigroup operation in $\left(\mathscr{C}_{m,n}^0,\tau_{\operatorname{\textsf{Ac}}}\right)$ is separately continuous, because all elements of the interassociate $\mathscr{C}_{m,n}$ of the bicyclic semigroup ${\mathscr{C}}(p,q)$ are isolated points in the space $(\mathscr{C}_{m,n}^0,\tau_{\operatorname{\textsf{Ac}}})$ and the left and right translations in the semigroup $\mathscr{C}_{m,n}$ are finite-to-one maps (see Lemma~\ref{lemma-2}).
\end{example}

\begin{remark}\label{remark-8}
By Theorem~\ref{theorem-3} the discrete topology $\tau_{\textsf{d}}$ is a unique Hausdorff topology on the interassociate $\mathscr{C}_{m,n}$ of the bicyclic monoid ${\mathscr{C}}(p,q)$, $m,n\in \mathbb{N}_0$, such that $\mathscr{C}_{m,n}$ is a semitopological semigroup. So $\tau_{\operatorname{\textsf{Ac}}}$ is the unique compact topology on $\mathscr{C}_{m,n}^0$ such that $(\mathscr{C}_{m,n}^0,\tau_{\operatorname{\textsf{Ac}}})$ is a Hausdorff compact semitopological semigroup for any non-negative integers $m$ and $n$.
\end{remark}

The following theorem generalized Theorem~1 from \cite{Gutik-2015}.

\begin{theorem}\label{theorem-8}
Let $m$ and $n$ be arbitrary non-negative integers. If $\left(\mathscr{C}_{m,n}^0,\tau\right)$ is a Hausdorff locally compact semitopological semigroup, then $\tau$ is either discrete or $\tau=\tau_{\operatorname{\textsf{Ac}}}$.
\end{theorem}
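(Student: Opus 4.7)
My plan is to combine Theorem~\ref{theorem-3} with Theorem~1 of~\cite{Gutik-2015} applied to the closed subsemigroup $\mathscr{C}_{m,n}^{*0} = \mathscr{C}_{m,n}^* \sqcup \{0\}$, and then to bootstrap the resulting information on $\mathscr{C}_{m,n}^*$ to all of $\mathscr{C}_{m,n}$ via separately continuous translations by $p^{m}$ and $q^{n}$. Theorem~\ref{theorem-3} makes every element of $\mathscr{C}_{m,n}$ isolated in $(\mathscr{C}_{m,n}^0,\tau)$, so if $0$ is also isolated then $\tau$ is discrete; I therefore assume throughout that $0$ is not isolated.

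First, $\mathscr{C}_{m,n}^{*0}$ is closed in $\mathscr{C}_{m,n}^0$, since its complement $\mathscr{C}_{m,n}\setminus\mathscr{C}_{m,n}^*$ is a union of isolated points, hence open; being closed in a Hausdorff locally compact space, $\mathscr{C}_{m,n}^{*0}$ is itself Hausdorff locally compact. By Lemma~\ref{lemma-1}, it is isomorphic as a semigroup to $\mathscr{C}(p,q)\sqcup\{0\}$, so Theorem~1 of~\cite{Gutik-2015} says the induced topology on $\mathscr{C}_{m,n}^{*0}$ is either discrete or the Alexandroff one-point compactification of discrete $\mathscr{C}_{m,n}^*$. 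I rule out the discrete case using the map $g(x) = q^{n}p^{m}\ast_{m,n} x \ast_{m,n} q^{n}p^{m}$. Separate continuity makes $g$ continuous, and a short bicyclic word computation shows $g(\mathscr{C}_{m,n}) \subseteq \mathscr{C}_{m,n}^*$, $g(0)=0$, and $g|_{\mathscr{C}_{m,n}^{*0}} = \mathrm{id}$. If $0$ were isolated in $\mathscr{C}_{m,n}^{*0}$, pick an open $U\subseteq\mathscr{C}_{m,n}^0$ with $U\cap\mathscr{C}_{m,n}^{*0}=\{0\}$; then $g^{-1}(U) = \{0\}$ is open, contradicting the non-isolation of $0$ in $\mathscr{C}_{m,n}^0$. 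Hence the induced topology on $\mathscr{C}_{m,n}^{*0}$ is the Alexandroff compactification, so every open neighbourhood $V$ of $0$ in $\mathscr{C}_{m,n}^0$ satisfies $\mathscr{C}_{m,n}^*\setminus V$ finite.

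The main obstacle is to promote this cofiniteness from $\mathscr{C}_{m,n}^*$ to $\mathscr{C}_{m,n}$. My plan is to exploit the bicyclic identities $p^{m}q^{m}=1=p^{n}q^{n}$, which yield the translation formulas $p^{m}\ast_{m,n} q^{i}p^{j} = q^{i-n}p^{j}$ for $i\geq n$ and $q^{i}p^{j}\ast_{m,n} q^{n} = q^{i}p^{j-m}$ for $j\geq m$ (with the natural interpretation $p^{0}=q^{0}=1$ when $m=0$ or $n=0$). These restrict to bijections $\mathscr{C}_{m,n}^* \to \{q^{i}p^{j} \colon i\geq 0,\, j\geq m\}$ and $\mathscr{C}_{m,n}^* \to \{q^{i}p^{j} \colon i\geq n,\, j\geq 0\}$, respectively. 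By separate continuity, $\lambda_{p^{m}}^{-1}(V)$ and $\rho_{q^{n}}^{-1}(V)$ are open neighbourhoods of $0$, so by the preceding paragraph $\mathscr{C}_{m,n}^*\setminus\lambda_{p^{m}}^{-1}(V)$ and $\mathscr{C}_{m,n}^*\setminus\rho_{q^{n}}^{-1}(V)$ are finite. Transporting through the two bijections shows that $V$ is cofinite in both $\{q^{i}p^{j} \colon j\geq m\}$ and $\{q^{i}p^{j} \colon i\geq n\}$; since the complement of their union in $\mathscr{C}_{m,n}$ is the finite square $\{q^{i}p^{j} \colon 0\leq i<n,\, 0\leq j<m\}$, $V$ is cofinite in $\mathscr{C}_{m,n}$. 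This makes $(\mathscr{C}_{m,n}^0,\tau)$ compact, and Remark~\ref{remark-8} then forces $\tau=\tau_{\operatorname{\textsf{Ac}}}$.
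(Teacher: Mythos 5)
Your proof is correct, and its overall architecture coincides with the paper's: both reduce to the copy $\left(\mathscr{C}_{m,n}^*\right)^0$ of the bicyclic monoid with zero via Lemma~\ref{lemma-1}, invoke Theorem~1 of \cite{Gutik-2015} there, and then promote cofiniteness of neighbourhoods of $0$ from $\mathscr{C}_{m,n}^*$ to all of $\mathscr{C}_{m,n}$ using exactly the same translations $\lambda_{p^m}$ and $\rho_{q^n}$ and the same finite corner $\{q^ip^j\colon i<n,\ j<m\}$. The one place you genuinely diverge is in showing that $0$ is not isolated in the subspace $\left(\mathscr{C}_{m,n}^*\right)^0$: the paper argues directly, splitting a small neighbourhood $W(0)$ into the two infinite strips $\{q^ip^j\colon j<m\}$ and $\{q^ip^j\colon i<n\}$ and multiplying by $q^np^m$ on the appropriate side to push infinitely many points of $W(0)$ into $V(0)\cap\mathscr{C}_{m,n}^*$, whereas you use the continuous retraction $g(x)=q^np^m\ast_{m,n}x\ast_{m,n}q^np^m$ of $\mathscr{C}_{m,n}^0$ onto $\left(\mathscr{C}_{m,n}^*\right)^0$ and pull back a hypothetical isolating neighbourhood to get $g^{-1}(U)=\{0\}$ open. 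Your retraction argument is slicker and avoids the case analysis; the paper's computation is more elementary but longer. Both the verification that $g$ lands in $\mathscr{C}_{m,n}^*$ and fixes it pointwise, and the transport of cofiniteness through the two translation bijections, check out.
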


\begin{proof}
Let $\tau$ be a Hausdorff locally compact topology on $\mathscr{C}_{m,n}^0$ such that $\left(\mathscr{C}_{m,n}^0,\tau\right)$ is a semitopological semigroup and the zero $0$ of $\mathscr{C}_{m,n}^0$ is not an isolated point of the space $\left(\mathscr{C}_{m,n}^0,\tau\right)$. By Lemma~\ref{lemma-1} the subsemigroup $\mathscr{C}_{m,n}^*$ of $\mathscr{C}_{m,n}$ is isomorphic to the bicyclic semigroup $\mathscr{C}(p,q)$ and hence the subsemigroup $\left(\mathscr{C}_{m,n}^*\right)^0=\mathscr{C}_{m,n}^*\sqcup\{0\}$ of $\mathscr{C}_{m,n}^0$ is isomorphic to the bicyclic semigroup with adjoined zero $\mathscr{C}^0=\mathscr{C}(p,q)\sqcup\{0\}$. Theorem~\ref{theorem-3} implies that $\mathscr{C}_{m,n}$ is a dense discrete subspace of $\left(\mathscr{C}_{m,n}^0,\tau\right)$, so it is open
by Corollary~3.3.10 of \cite{Engelking-1989}. This Corollary also implies that  the subspace $\left(\mathscr{C}_{m,n}^*\right)^0$ of $\left(\mathscr{C}_{m,n}^0,\tau\right)$ is locally compact.

We claim that for every open neighbourhood $V(0)$ of zero $0$ in $\left(\mathscr{C}_{m,n}^0,\tau\right)$ the set $V(0)\cap\left(\mathscr{C}_{m,n}^*\right)^0$ is infinite. Suppose to the contrary that there exists an open neighbourhood $V(0)$ of zero $0$ in $\left(\mathscr{C}_{m,n}^0,\tau\right)$ such that the set $V(0)\cap\left(\mathscr{C}_{m,n}^*\right)^0$ is finite. Since the space $\left(\mathscr{C}_{m,n}^0,\tau\right)$ is Hausdorff, without loss of generality we may assume that $V(0)\cap\left(\mathscr{C}_{m,n}^*\right)^0=\{0\}$. Then by the separate continuity of the semigroup operation of $\left(\mathscr{C}_{m,n}^0,\tau\right)$ there exists an open neighbourhood $W(0)$ of zero in $\left(\mathscr{C}_{m,n}^0,\tau\right)$ such that $W(0)\subseteq V(0)$ and
\begin{equation*}
  \left(q^np^m\ast_{m,n}W(0)\right)\cup \left(W(0)\ast_{m,n}q^np^m\right)\subseteq V(0).
\end{equation*}
Since $0$ is a non-isolated point of $\left(\mathscr{C}_{m,n}^0,\tau\right)$, at least one of the following conditions holds:
\begin{itemize}
  \item[(a)] the set $W(0)\cap\left\{q^ip^j\colon i\in\mathbb{N}_0, j=0,1,\ldots,m-1\right\}$ is infinite;
  \item[(b)] the set $W(0)\cap\left\{q^ip^j\colon i=0,1,\ldots,n-1, j\in\mathbb{N}_0\right\}$ is infinite.
\end{itemize}

If (a) holds then the neighbourhood $W(0)$ contains infinitely many elements of the form $q^ip^j$, where $j<m$, for which we have that
\begin{equation*}
  q^ip^j\ast_{m,n}q^np^m=q^ip^jq^mp^nq^np^m=q^ip^jq^mp^m=q^{i-j+m}p^m\in\mathscr{C}_{m,n}^*.
\end{equation*}
Similarly, if (b) holds then the neighbourhood $W(0)$ contains infinitely many elements of the form $q^ip^j$, where $i<n$, for which we have that
\begin{equation*}
  q^np^m\ast_{m,n}q^ip^j=q^np^mq^mp^nq^ip^j=q^np^nq^ip^j=q^np^{n-i+j}\in\mathscr{C}_{m,n}^*.
\end{equation*}
The above arguments imply that the set $V(0)\cap\left(\mathscr{C}_{m,n}^*\right)^0$ is infinite. Hence we have that the zero $0$ is a non-isolated point in the subspace $\left(\mathscr{C}_{m,n}^*\right)^0$ of $\left(\mathscr{C}_{m,n}^0,\tau\right)$.

By Lemma~\ref{lemma-1} the subsemigroup $\mathscr{C}_{m,n}^*$ of $\mathscr{C}_{m,n}$ is isomorphic to the bicyclic semigroup and hence
by Theorem~1 from \cite{Gutik-2015} we obtain that the space $\left(\mathscr{C}_{m,n}^*\right)^0$ is compact. Then for every open neighbourhood $U(0)$ of the zero $0$ in $\left(\mathscr{C}_{m,n}^0,\tau\right)$ we have that the set $\left(\mathscr{C}_{m,n}^*\right)^0\setminus U(0)$ is finite.

Now, the semigroup operation of $\mathscr{C}_{m,n}^0$ implies that
\begin{equation*}
  p^m\ast_{m,n}q^ip^j=p^mq^mp^nq^ip^j=p^nq^ip^j=q^{i-n}p^j
\end{equation*}
and
\begin{equation*}
  q^ip^j\ast_{m,n}q^n=q^ip^jq^mp^nq^n=q^ip^jq^m=q^ip^{j-m},
\end{equation*}
for arbitrary element $q^ip^j\in\mathscr{C}_{m,n}^*$.
This and the definition of $\mathscr{C}_{m,n}^*$ imply that
\begin{equation*}
  p^m\ast_{m,n}\mathscr{C}_{m,n}^*=\left\{q^{i-n}p^j\colon i\geqslant n, j\geqslant m\right\}
\end{equation*}
and
\begin{equation*}
  \mathscr{C}_{m,n}^*\ast_{m,n}q^n=\left\{q^ip^{j-m}\colon i\geqslant n, j\geqslant m\right\}.
\end{equation*}
Thus the set  $\mathscr{C}_{m,n}^0\setminus\left(p^m\ast_{m,n}\left(\mathscr{C}_{m,n}^*\right)^0\cup\left(\mathscr{C}_{m,n}^*\right)^0\ast_{m,n}q^n\right)$ is finite, and hence the above arguments imply that every open neighbourhood $U(0)$ of the zero $0$ in $\left(\mathscr{C}_{m,n}^0,\tau\right)$ has a finite complement in the space $\left(\mathscr{C}_{m,n}^0,\tau\right)$. Thus the space $\left(\mathscr{C}_{m,n}^0,\tau\right)$ is compact and by Remark~\ref{remark-8} the semitopological semigroup $\mathscr{C}_{m,n}^0$ is topologically isomorphic to the semitopological semigroup $\left(\mathscr{C}_{m,n}^0,\tau_{\operatorname{\textsf{Ac}}}\right)$.
\end{proof}

Since by Corollary~\ref{corollary-5} the interassociate $\mathscr{C}_{m,n}$ of the bicyclic monoid ${\mathscr{C}}(p,q)$ does not embed into any Hausdorff compact topological semigroup, Theorem~\ref{theorem-8} implies the following corollary.

\begin{corollary}\label{corollary-9}
If $m$ and $n$ are arbitrary non-negative integers and $\mathscr{C}_{m,n}^0$ is a Hausdorff locally compact topological semigroup, then $\mathscr{C}_{m,n}^0$ is discrete.
\end{corollary}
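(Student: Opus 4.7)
The plan is to derive Corollary~\ref{corollary-9} as a direct consequence of Theorem~\ref{theorem-8} together with Corollary~\ref{corollary-5}, reducing the statement to a two-case analysis that is immediately closed off by the compactness obstruction for topological (as opposed to merely semitopological) semigroups containing $\mathscr{C}_{m,n}$.

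First I would observe that every Hausdorff locally compact topological semigroup is in particular a Hausdorff locally compact semitopological semigroup, so the hypothesis of Theorem~\ref{theorem-8} is satisfied by $\left(\mathscr{C}_{m,n}^0,\tau\right)$. Theorem~\ref{theorem-8} then forces the dichotomy that either $\tau$ is the discrete topology or $\tau = \tau_{\operatorname{\textsf{Ac}}}$, the Alexandroff one-point compactification topology described in Example~\ref{example-7}. In the first case there is nothing more to prove.

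In the second case, the space $\left(\mathscr{C}_{m,n}^0,\tau_{\operatorname{\textsf{Ac}}}\right)$ is compact by construction, and by assumption the semigroup operation on $\mathscr{C}_{m,n}^0$ is jointly continuous. Hence $\mathscr{C}_{m,n}^0$ is a Hausdorff compact topological semigroup which contains $\mathscr{C}_{m,n}$ as a subsemigroup. This directly contradicts item $(i)$ of Corollary~\ref{corollary-5}, which asserts that no Hausdorff compact topological semigroup contains $\mathscr{C}_{m,n}$. The contradiction eliminates the second alternative, leaving only the discrete case.

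I do not anticipate a real obstacle here, since the genuine work was already done in Theorem~\ref{theorem-8} (dichotomy) and in Corollary~\ref{corollary-5} (non-embeddability into compact topological semigroups). The only point that requires a bit of attention is verifying that the step from ``semitopological'' to ``topological'' is being used correctly: Theorem~\ref{theorem-8} is stated for semitopological semigroups and therefore applies, while the exclusion of the compact case in the second step genuinely needs the joint continuity assumed in the statement of Corollary~\ref{corollary-9}, because Example~\ref{example-7} shows that $\left(\mathscr{C}_{m,n}^0,\tau_{\operatorname{\textsf{Ac}}}\right)$ is a perfectly good compact semitopological semigroup.
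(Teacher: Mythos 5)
Your argument is exactly the paper's: apply the dichotomy of Theorem~\ref{theorem-8} (which holds a fortiori for topological semigroups) and then rule out the compact alternative via Corollary~\ref{corollary-5}$(i)$, since a compact Hausdorff topological semigroup cannot contain $\mathscr{C}_{m,n}$. The proof is correct and coincides with the one given in the paper.
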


The following example shows that a counterpart of the statement of Corollary~\ref{corollary-9} does not hold when $\mathscr{C}_{m,n}^0$ is a \v{C}ech-complete metrizable topological semigroup for any non-negative integers $m$ and $n$.

\begin{example}\label{example-10}
Fix arbitrary non-negative integers $m$ and $n$. On the semigroup $\mathscr{C}_{m,n}^0$ we define a topology $\tau_{\operatorname{\textsf{1}}}$ in the following way:
\begin{itemize}
  \item[$(i)$] every element of the interassociate $\mathscr{C}_{m,n}$ of the bicyclic monoid is an isolated point in the space $(\mathscr{C}_{m,n}^0,\tau_{\operatorname{\textsf{1}}})$;
  \item[$(ii)$] the family $\mathscr{B}_1(0)=\left\{U_s \colon s\in \mathbb{N}_0\right\}$, where
   \begin{equation*}
   U_s=\{0\}\cup\left\{q^{n+i}p^{m+j}\in\mathscr{C}_{m,n}^0\colon i,j>s\right\},
\end{equation*}
is a base of the topology $\tau_{\operatorname{\textsf{1}}}$ at the zero.
\end{itemize}
It is obvious that $(\mathscr{C}_{m,n}^0,\tau_{\operatorname{\textsf{1}}})$ is first countable. Then the definition of the semigroup operation of $\mathscr{C}_{m,n}^0$ and the arguments presented in \cite[p.~68]{Gutik-1996} show that $(\mathscr{C}_{m,n}^0,\tau_{\operatorname{\textsf{1}}})$ is a Hausdorff topological semigroup.

First we observe that each element of the family $\mathscr{B}_1(0)$ is an open-and-closed subset of $(\mathscr{C}_{m,n}^0,\tau_{\operatorname{\textsf{1}}})$, and hence the space $(\mathscr{C}_{m,n}^0,\tau_{\operatorname{\textsf{1}}})$ is regular. Since the space $\mathscr{C}_{m,n}^0$ is countable and first countable, it is second countable and hence by Theorem~4.2.9 from \cite{Engelking-1989} it is metrizable. Also, by Theorem~4.3.26 from \cite{Engelking-1989} the space $(\mathscr{C}_{m,n}^0,\tau_{\operatorname{\textsf{1}}})$ is \v{C}ech-complete, as a completely metrizable space.
\end{example}

Also the following example presents an interassociate of the bicyclic semigroup with adjoined zero $\mathscr{C}^0=\mathscr{C}(p,q)\sqcup\{0\}$ for which a counterpart of the statements of Theorem~\ref{theorem-8} and of Corollary~\ref{corollary-9} do not hold.

\begin{example}\label{example-11}
The interassociate of the bicyclic semigroup with adjoined zero $\mathscr{C}^0$ with the operation $a\ast b=a\cdot 0\cdot b$ is a countable semigroup with zero-multiplication. It is well known that this semigroup endowed with any topology is a topological semigroup (see \cite[Vol.~1, Chapter~1]{Carruth-Hildebrant-Koch-1983-1986}).
\end{example}

Later we shall need the following notions. A continuous map $f\colon X\to Y$ from a topological space $X$ into a topological space $Y$ is called:
\begin{itemize}
  \item[$\bullet$] \emph{quotient} if the set $f^{-1}(U)$ is open in $X$ if and only if $U$ is open in $Y$ (see \cite{Moore-1925} and \cite[Section~2.4]{Engelking-1989});
  \item[$\bullet$] \emph{hereditarily quotient} or \emph{pseudoopen} if for every $B\subset Y$ the restriction $f|_{B}\colon f^{-1}(B)$ $\rightarrow B$ of $f$ is a quotient map (see \cite{McDougle-1958, McDougle-1959, Arkhangelskii-1963} and \cite[Section~2.4]{Engelking-1989});
  \item[$\bullet$] \emph{closed} if $f(F)$ is closed in $Y$ for every closed subset $F$ in $X$;
  \item[$\bullet$] \emph{perfect} if $X$ is Hausdorff, $f$ is a closed map and all fibers $f^{-1}(y)$ are compact subsets of $X$ (see \cite{Vainstein-1947} and \cite[Section~3.7]{Engelking-1989}).
\end{itemize}
Every closed map and every hereditarily quotient map are quotient \cite{Engelking-1989}. Moreover, a continuous map $f\colon X\to Y$ from a topological space $X$ onto a topological space $Y$ is hereditarily quotient if and only if for every $y\in Y$ and every open subset $U$ in $X$ which contains $f^{-1}(y)$ we have that $y\in\operatorname{int}_Y(f(U))$ (see \cite[2.4.F]{Engelking-1989}).

We need the following trivial lemma, which follows from separate continuity of the semigroup operation in semitopological semigroups.

\begin{lemma}\label{lemma-11}
Let $S$ be a Hausdorff semitopological semigroup and $I$ be a compact ideal in $S$. Then the Rees-quotient semigroup $S/I$ with the quotient topology is a Hausdorff semitopological semigroup.
\end{lemma}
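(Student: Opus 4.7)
My plan is to verify, separately, that the Rees-quotient space $S/I$ is Hausdorff and that both translations on $S/I$ are continuous. Let $\pi\colon S\to S/I$ denote the quotient map, so that $S/I=(S\setminus I)\sqcup\{\mathbf{0}\}$ with $\mathbf{0}=\pi(I)$, and recall that a set $V\subseteq S/I$ is open iff $\pi^{-1}(V)$ is open in $S$. First I would note that, because $I$ is compact in the Hausdorff space $S$, $I$ is closed and $S\setminus I$ is open; consequently, for any open $U\subseteq S$ that either misses $I$ or contains $I$, the image $\pi(U)$ is open in $S/I$, since in both cases $\pi^{-1}(\pi(U))=U$.

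To establish Hausdorffness, I would split into two cases. Two distinct points $\pi(x),\pi(y)$ of $S\setminus I$ are separated inside the open subspace $S\setminus I$ by Hausdorffness of $S$, and the separating neighbourhoods push forward to $S/I$ by the preceding remark. To separate a point $\pi(x)\in S\setminus I$ from $\mathbf{0}$, I would invoke the standard fact that in a Hausdorff space a point can be separated from a disjoint compact set by disjoint open sets: choose disjoint open $U\ni x$ and $V\supseteq I$, and the images $\pi(U)$ and $\pi(V)$ are the required disjoint open neighbourhoods of $\pi(x)$ and $\mathbf{0}$.

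For separate continuity, fix $a\in S/I$ and consider left translation $\lambda_a\colon S/I\to S/I$; right translation is symmetric. If $a=\pi(s)$ for some $s\in S\setminus I$, then because $I$ is a two-sided ideal one has $\lambda_a\circ\pi=\pi\circ\lambda_s^S$, where $\lambda_s^S$ denotes left translation by $s$ in $S$. Hence for any open $W\subseteq S/I$ the set $\pi^{-1}(\lambda_a^{-1}(W))=(\lambda_s^S)^{-1}(\pi^{-1}(W))$ is open in $S$ by separate continuity of multiplication in $S$, so $\lambda_a^{-1}(W)$ is open in $S/I$ by definition of the quotient topology. If instead $a=\mathbf{0}$, then $\lambda_{\mathbf{0}}$ is constantly $\mathbf{0}$, since $I$ absorbs everything on the left, and is therefore trivially continuous.

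I do not anticipate a genuine obstacle. The compactness of $I$ enters only through the separation of $\mathbf{0}$ from the remaining points of $S/I$, while the two-sided ideal property supplies both the well-definedness of the induced translations and the trivial case $a=\mathbf{0}$.
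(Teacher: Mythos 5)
Your proof is correct. The paper states this lemma without proof, calling it trivial and attributing it to separate continuity; your argument (closedness of the compact ideal, separation of $\mathbf{0}$ from points of $S\setminus I$ via the point--compact-set separation property, and the commuting square $\lambda_{\pi(s)}\circ\pi=\pi\circ\lambda_s^S$ combined with the definition of the quotient topology) is exactly the standard verification the authors had in mind, with no gaps.
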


The following theorem generalizes Theorem~2 from \cite{Gutik-2015}.

\begin{theorem}\label{theorem-12}
Let $(\mathscr{C}_{m,n}^I,\tau)$ be a Hausdorff locally compact semitopological semigroup, $\mathscr{C}_{m,n}^I=\mathscr{C}_{m,n}\sqcup I$ and $I$ is a compact ideal of $\mathscr{C}_{m,n}^I$. Then either $(\mathscr{C}_{m,n}^I,\tau)$ is a compact semitopological semigroup or the ideal $I$ is open.
\end{theorem}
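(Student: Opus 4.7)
The plan is to reduce the statement to Theorem~\ref{theorem-8} via the Rees quotient. Algebraically, collapsing the compact ideal $I$ to a single point in $\mathscr{C}_{m,n}^I$ yields a semigroup isomorphic to $\mathscr{C}_{m,n}^0$, with the class $[I]$ serving as the adjoined zero: products of two elements of $\mathscr{C}_{m,n}$ remain in $\mathscr{C}_{m,n}$ because $\mathscr{C}_{m,n}$ is a subsemigroup, while any product involving a point of $I$ lies in $I$ because $I$ is an ideal. Let $\pi\colon \mathscr{C}_{m,n}^I\to \mathscr{C}_{m,n}^I/I$ denote the natural quotient map, and equip the codomain with the quotient topology.

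First I would verify that $\pi$ is a perfect map. Every fibre of $\pi$ is either a singleton or the compact set $I$. For closedness, observe that if $F$ is closed in $\mathscr{C}_{m,n}^I$, then $\pi^{-1}(\pi(F))$ equals either $F$ (when $F\cap I=\varnothing$) or $F\cup I$, and in both cases it is closed, since $I$ is compact and therefore closed in the Hausdorff space $\mathscr{C}_{m,n}^I$. Hence $\pi(F)$ is closed in the quotient, so $\pi$ is closed with compact fibres, i.e.\ perfect. By Lemma~\ref{lemma-11} the quotient $\mathscr{C}_{m,n}^I/I$ is a Hausdorff semitopological semigroup, and since perfect surjections preserve local compactness (Theorem~3.7.21 of \cite{Engelking-1989}), $\mathscr{C}_{m,n}^I/I$ is moreover locally compact.

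Now Theorem~\ref{theorem-8} applied to the Hausdorff locally compact semitopological semigroup $\mathscr{C}_{m,n}^I/I\cong\mathscr{C}_{m,n}^0$ leaves exactly two possibilities. If the quotient topology is discrete, then the singleton $\{[I]\}$ is open in $\mathscr{C}_{m,n}^I/I$, whence $I=\pi^{-1}(\{[I]\})$ is open in $\mathscr{C}_{m,n}^I$, giving the second alternative of the theorem. If instead the quotient topology coincides with $\tau_{\operatorname{\textsf{Ac}}}$, then $\mathscr{C}_{m,n}^I/I$ is compact; since preimages of compact sets under perfect maps are compact, $\mathscr{C}_{m,n}^I=\pi^{-1}(\mathscr{C}_{m,n}^I/I)$ is itself compact, giving the first alternative.

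The principal technical hurdle is the identification of $\mathscr{C}_{m,n}^I/I$ as a Hausdorff locally compact semitopological semigroup, which hinges on establishing that the Rees quotient map $\pi$ is perfect. Once this is done, both transfers of topological structure (local compactness forward to the quotient, and compactness backward to $\mathscr{C}_{m,n}^I$) are automatic, and the dichotomy of Theorem~\ref{theorem-8} delivers precisely the two alternatives in the conclusion.
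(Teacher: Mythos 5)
Your proof is correct and follows the same overall skeleton as the paper's: pass to the Rees quotient $\mathscr{C}_{m,n}^I/I\cong\mathscr{C}_{m,n}^0$, use Lemma~\ref{lemma-11} for Hausdorffness, apply the dichotomy of Theorem~\ref{theorem-8}, and transfer each alternative back along $\pi$. Where you genuinely diverge is in the middle technical step. You establish that $\pi$ is perfect, and your verification is sound and essentially immediate: $\pi^{-1}(\pi(F))$ is either $F$ or $F\cup I$, which is closed because the compact ideal $I$ is closed in the Hausdorff space $\mathscr{C}_{m,n}^I$, so $\pi$ is a closed map with compact fibres; perfectness then gives both transfers for free (local compactness forward to the quotient, compactness of $\pi^{-1}(\mathscr{C}_{m,n}^I/I)$ backward). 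The paper instead shows that $\pi$ is hereditarily quotient --- which requires first observing that the points of $\mathscr{C}_{m,n}$ are isolated in $(\mathscr{C}_{m,n}^I,\tau)$ so that complements of neighbourhoods of $I$ are open --- and then invokes Din' N'e T'ong's theorem to get local compactness of the quotient, followed by a separate explicit open-cover argument to deduce compactness of $\mathscr{C}_{m,n}^I$ from compactness of the quotient. Your route is slightly cleaner and more self-contained: the closedness of $\pi$ needs nothing beyond the definition of the quotient topology and the closedness of $I$, and the standard properties of perfect maps replace both the Din' N'e T'ong citation and the cover argument.
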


\begin{proof}
Suppose that $I$ is not open. By Lemma~\ref{lemma-11} the Rees-quotient semigroup $\mathscr{C}_{m,n}^I/I$ with the quotient topology $\tau_{\operatorname{\textsf{q}}}$ is a semitopological semigroup. Let $\pi\colon \mathscr{C}_{m,n}^I\to \mathscr{C}_{m,n}^I/I$ be the natural homomorphism, which is a quotient map. It is obvious that the Rees-quotient semigroup $\mathscr{C}_{m,n}^I/I$ is isomorphic to the semigroup $\mathscr{C}_{m,n}^0$ and the image $\pi(I)$ is zero of $\mathscr{C}_{m,n}^I/I$. Now we shall show that the natural homomorphism $\pi\colon \mathscr{C}_{m,n}^I\to \mathscr{C}_{m,n}^I/I$ is a hereditarily quotient map. Since $\pi(\mathscr{C}_{m,n})$ is a discrete subspace of $(\mathscr{C}_{m,n}^I/I,\tau_{\operatorname{\textsf{q}}})$, it is sufficient to show that for every open neighbourhood $U(I)$ of the ideal $I$ in the space $(\mathscr{C}_{m,n}^I,\tau)$ the image  $\pi(U(I))$ is an open neighbourhood of the zero $0$ in the space $(\mathscr{C}_{m,n}^I/I,\tau_{\operatorname{\textsf{q}}})$. Indeed, $\mathscr{C}_{m,n}^I\setminus U(I)$ is an open subset of $(\mathscr{C}_{m,n}^I,\tau)$, because the elements of the semigroup $\mathscr{C}_{m,n}$ are isolated points of the space $(\mathscr{C}_{m,n}^I,\tau)$. Also, since the restriction $\pi|_{\mathscr{C}_{m,n}}\colon \mathscr{C}_{m,n}\to \pi(\mathscr{C}_{m,n})$ of the natural homomorphism $\pi\colon \mathscr{C}_{m,n}^I\to \mathscr{C}_{m,n}^I/I$ is one-to-one,  $\pi(\mathscr{C}_{m,n}^I\setminus U(I))$ is a closed subset of $(\mathscr{C}_{m,n}^I/I,\tau_{\operatorname{\textsf{q}}})$. So $\pi(U(I))$ is an open neighbourhood of the zero $0$ of the semigroup $(\mathscr{C}_{m,n}^I/I,\tau_{\operatorname{\textsf{q}}})$, and hence the natural homomorphism $\pi\colon \mathscr{C}_{m,n}^I\to \mathscr{C}_{m,n}^I/I$ is a hereditarily quotient map. Since $I$ is a compact ideal of the semitopological semigroup $(\mathscr{C}_{m,n}^I,\tau)$, $\pi^{-1}(y)$ is a compact subset of $(\mathscr{C}_{m,n}^I,\tau)$ for every $y\in \mathscr{C}_{m,n}^I/I$. By Din' N'e T'ong's Theorem (see \cite{Din'-N'e-T'ong-1963} or \cite[3.7.E]{Engelking-1989}), $(\mathscr{C}_{m,n}^I/I,\tau_{\operatorname{\textsf{q}}})$ is a Hausdorff locally compact space. Since $I$ is not open, by Theorem~\ref{theorem-8} the semitopological semigroup $(\mathscr{C}_{m,n}^I/I,\tau_{\operatorname{\textsf{q}}})$ is topologically isomorphic to $(\mathscr{C}_{m,n}^0,\tau_{\operatorname{\textsf{Ac}}})$ and hence it is compact.
We claim that the space $(\mathscr{C}_{m,n}^I,\tau)$ is compact. Indeed, let $\mathscr{U}=\left\{U_\alpha\colon\alpha\in\mathscr{I}\right\}$ be an arbitrary open cover of the topological space $(\mathscr{C}_{m,n}^I,\tau)$. Since $I$ is compact, there exists a finite family $\{U_{\alpha_1},\ldots,U_{\alpha_n}\}\subset\mathscr{U}$ such that $I\subseteq U_{\alpha_1}\cup\cdots\cup U_{\alpha_n}$. Put $U=U_{\alpha_1}\cup\cdots\cup U_{\alpha_n}$. Then $\mathscr{C}_{m,n}^I\setminus U$ is a closed-and-open subset of $(\mathscr{C}_{m,n}^I,\tau)$. Also, since the restriction $\pi|_{\mathscr{C}_{m,n}}\colon \mathscr{C}_{m,n}\to \pi(\mathscr{C}_{m,n})$ of the natural homomorphism $\pi$ is one-to-one, $\pi(\mathscr{C}_{m,n}^I\setminus U(I))$ is an open-and-closed subset of $(\mathscr{C}_{m,n}^I/I,\tau_{\operatorname{\textsf{q}}})$, and hence the image $\pi(\mathscr{C}_{m,n}^I\setminus U(I))$ is finite, because the semigroup $(\mathscr{C}_{m,n}^I/I,\tau_{\operatorname{\textsf{q}}})$ is compact. Thus, the set $\mathscr{C}_{m,n}^I\setminus U$ is finite as well and hence the space $(\mathscr{C}_{m,n}^I,\tau)$ is also compact.
\end{proof}

\begin{corollary}\label{corollary-13}
If $(\mathscr{C}_{m,n}^I,\tau)$ is a Hausdorff locally compact topological semigroup, $\mathscr{C}_{m,n}^I=\mathscr{C}_{m,n}\sqcup I$ and $I$ is a compact ideal of $\mathscr{C}_{m,n}^I$, then the ideal $I$ is open.
\end{corollary}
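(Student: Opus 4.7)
The plan is to invoke Theorem~\ref{theorem-12} to obtain the dichotomy that either $(\mathscr{C}_{m,n}^I,\tau)$ is compact or the ideal $I$ is open, and then to rule out the compact alternative by applying Corollary~\ref{corollary-5}. The hypotheses of Corollary~\ref{corollary-13} are strictly stronger than those of Theorem~\ref{theorem-12}: every Hausdorff locally compact topological semigroup is a Hausdorff locally compact semitopological semigroup, so the theorem applies verbatim.

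Concretely, I would first note that $(\mathscr{C}_{m,n}^I,\tau)$ satisfies the assumptions of Theorem~\ref{theorem-12}, since $I$ is a compact ideal by hypothesis. Therefore either $(\mathscr{C}_{m,n}^I,\tau)$ is a compact semitopological semigroup or $I$ is an open subset of $\mathscr{C}_{m,n}^I$. Suppose, for contradiction, that the first alternative holds. Since $(\mathscr{C}_{m,n}^I,\tau)$ is a Hausdorff topological semigroup (not merely semitopological), it is a Hausdorff compact topological semigroup containing $\mathscr{C}_{m,n}$ as a subsemigroup. However, by Corollary~\ref{corollary-5}$(i)$, a Hausdorff compact topological semigroup cannot contain $\mathscr{C}_{m,n}$, yielding the desired contradiction.

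Thus the compact case is impossible, and we conclude that $I$ is open in $(\mathscr{C}_{m,n}^I,\tau)$. There is no real obstacle in this argument; the whole point is that the jump from semitopological to topological semigroup is precisely what makes Corollary~\ref{corollary-5} available to eliminate one branch of the dichotomy provided by Theorem~\ref{theorem-12}. The only thing one must be careful about is to apply Corollary~\ref{corollary-5} to the ambient topological semigroup $\mathscr{C}_{m,n}^I$ itself and use the containment $\mathscr{C}_{m,n}\subseteq\mathscr{C}_{m,n}^I$, rather than trying to apply it to $\mathscr{C}_{m,n}$ with a subspace topology.
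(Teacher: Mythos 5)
Your proof is correct and follows exactly the route the paper intends: apply the dichotomy of Theorem~\ref{theorem-12} and eliminate the compact alternative via Corollary~\ref{corollary-5}$(i)$, mirroring how the paper derives Corollary~\ref{corollary-9} from Theorem~\ref{theorem-8}. No issues.
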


\section*{Acknowledgements}

We acknowledge the referee for useful important comments and suggestions.

\end{document}